\documentclass[10pt,reqno]{amsart}

\pdftrailerid{}
\usepackage[OT1]{fontenc}
\usepackage[margin=1.08in]{geometry}
\usepackage{microtype}
\usepackage{amsmath,amssymb,mathtools,mathrsfs}
\usepackage{enumitem}
\usepackage[hidelinks]{hyperref}
\hypersetup{
  pdftitle={Exceptional Sets of Positive Capacity in Gross's Star Theorem},
  pdfauthor={Sina Nadi},
  pdfsubject={Exceptional Sets in Gross's Star Theorem},
  pdfcreator={},
  pdfproducer={},
  pdfkeywords={},
}

\setlist[enumerate]{label=(\roman*),leftmargin=2.15em,itemsep=0.15em,topsep=0.35em}
\setlist[itemize]{leftmargin=1.8em,itemsep=0.15em,topsep=0.35em}
\numberwithin{equation}{section}

\newtheorem{theorem}{Theorem}[section]
\newtheorem{lemma}[theorem]{Lemma}
\newtheorem{proposition}[theorem]{Proposition}

\theoremstyle{remark}
\newtheorem{remark}[theorem]{Remark}

\newcommand{\C}{\mathbb C}
\newcommand{\D}{\mathbb D}
\newcommand{\Chat}{\widehat{\mathbb C}}
\newcommand{\T}{\mathbb T}
\newcommand{\eps}{\varepsilon}
\newcommand{\dd}{\,\mathrm d}
\newcommand{\dist}{\operatorname{dist}}
\newcommand{\diam}{\operatorname{diam}}
\newcommand{\caplog}{\operatorname{cap}}
\newcommand{\Mod}{\operatorname{Mod}_2}
\newcommand{\Real}[1]{\operatorname{\mathit{Re}}\left\{#1\right\}}

\newcommand{\Log}{\operatorname{Log}}
\newcommand{\lemref}[1]{\hyperref[#1]{Lemma~\ref*{#1}}}
\newcommand{\propref}[1]{\hyperref[#1]{Proposition~\ref*{#1}}}
\newcommand{\thmref}[1]{\hyperref[#1]{Theorem~\ref*{#1}}}
\newcommand{\corref}[1]{\hyperref[#1]{Corollary~\ref*{#1}}}
\newcommand{\secref}[1]{\hyperref[#1]{Section~\ref*{#1}}}

\makeatletter
\def\@secnumfont{\bfseries}
\def\section{\@startsection{section}{1}%
  \z@{.7\linespacing\@plus\linespacing}{.5\linespacing}%
  {\normalfont\bfseries\centering}}
\makeatother

\title[Exceptional Sets of Positive Capacity]
{Exceptional Sets of Positive Capacity in Gross's Star Theorem}
\author[Sina Nadi]{Sina Nadi}
\date{}

\begin{document}

\begin{abstract}
In 1918, Gross proved that a regular local inverse of a meromorphic
function in the plane can be continued analytically along every ray from
its centre except for directions in a set of Lebesgue measure zero.
In 1977, Nagasaka asked whether this exceptional set must have logarithmic
capacity zero. We prove that there exist a transcendental entire function
and a regular local inverse for which the exceptional set contains a
compact set of positive logarithmic capacity. In fact, for every
$0<s<1$, such a function and inverse can be chosen so that this compact set
has positive $s$-dimensional Hausdorff measure.
\end{abstract}

\maketitle

\section{Introduction}\label{sec:introduction}

For $z\in\C$ and $r>0$, we write
$\Delta(z,r)=\{w\in\C:|w-z|<r\}$, $\D=\Delta(0,1)$, and
$\T=\partial\D$.
Let $f$ be meromorphic in $\C$. Suppose that $f(z_0)\in\C$ and
$f'(z_0)\ne0$. Put $w_0=f(z_0)$ and denote by $\phi$ the inverse germ
determined by
\[
 \phi(w_0)=z_0,
 \qquad
 f(\phi(w))=w.
\]

In 1918, Gross proved that, for almost every $\theta$ with respect to
Lebesgue measure, $\phi$ admits analytic continuation along the ray
\begin{equation}\label{eq:gross-ray}
 w=w_0+te^{i\theta},
 \qquad 0\leq t<+\infty.
\end{equation}
See \cite{Gross1918}.
The exceptional set $E(f,\phi)$ consists of the points $e^{i\theta}\in\T$
for which $\phi$ does not admit analytic continuation along the whole ray
\eqref{eq:gross-ray}.
Although the exceptional set has measure zero, it need not be countable.
Volkovyskii constructed a meromorphic function in the plane with
uncountably many exceptional directions
\cite[Theorem~17]{Volkovyskii1950}. The exceptional set in this
example has logarithmic capacity zero \cite[Section~4, Question~1]{Eremenko2026}, so the example
leaves open the possibility that capacity imposes a stronger restriction
than Lebesgue measure.

Kuramochi asked whether the exceptional set must satisfy a stronger
restriction when the covering surface is conformally equivalent to
$\C$ \cite[p.~85]{Kuramochi1963}. For the larger class of covering
surfaces without a Green function, Nagasaka attributed the question of
vanishing logarithmic capacity to Noshiro \cite[p.~1]{Nagasaka1977}.
He proved that every $F_\sigma$ set of Lebesgue measure zero occurs as the
exceptional set of an infinitely connected planar covering surface without
a Green function. These surfaces are not conformally equivalent to $\C$, and
the plane case therefore remained open. In 1977, Nagasaka stated the
following problem for that case \cite[p.~2]{Nagasaka1977}.

\medskip
\noindent\textbf{Nagasaka's problem.}
Let $f$ be a nonconstant meromorphic function in $\C$, and let $\phi$ be a
regular local inverse of $f$ at $w_0\in\C$. Must $E(f,\phi)$ have
logarithmic capacity zero?
\medskip

A related problem was posed by Eremenko and recorded as
Problem~1.32 in the 1984 collection of Barth, Brannan, and Hayman
\cite{BarthBrannanHayman1984}. That question asks whether Gross's
exceptional set satisfies a stronger restriction. Stephenson explicitly
formulated the alternatives of Hausdorff dimension zero and logarithmic
capacity zero, and referred to this problem
\cite[p.~337]{Stephenson1988}. Hayman and Lingham reproduce the 1984
question with its attribution to Eremenko
\cite[Problem~1.32]{HaymanLingham2019}. Eremenko also states the capacity
question \cite{Eremenko2026}.

We prove that a transcendental entire function can have an inverse germ
whose exceptional set has positive logarithmic capacity. Here and below,
$\caplog$ denotes logarithmic capacity, and $\mathcal H^s$ denotes
$s$-dimensional Hausdorff measure.

\begin{theorem}\label{thm:main}
For every $0<s<1$, there exist a transcendental entire function $f$, a point $z_0$ with
$f'(z_0)\ne0$, and the corresponding inverse germ $\phi$ at
$w_0=f(z_0)$ such that
\[
 \caplog E(f,\phi)>0.
\]

More precisely, after an affine change in the range one may take $w_0=0$.
There is a compact set $K\subset\mathbb R$ for which
\[
 \caplog\{e^{i\theta}:\theta\in K\}>0,
 \qquad
 \mathcal H^s\bigl(\{e^{i\theta}:\theta\in K\}\bigr)>0,
\]
and, for every $\theta\in K$, the germ $\phi$ admits analytic continuation
along $[0,e^{i\theta})$ but not through $e^{i\theta}$.
\end{theorem}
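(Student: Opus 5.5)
We will build $f$ so that, seen from $w_0=0$, the Riemann surface of $f^{-1}$ has a logarithmic-type singularity over each point of a prescribed compact set lying on the unit circle of the $w$-plane. Concretely, fix $s$ and a Cantor set $K\subset[0,\pi/2]$ of the usual type: at generation $n$ there are $2^n$ intervals of length $\ell_n$, with $2^n\ell_n^{s}\asymp1$, so $\mathcal H^s(\{e^{i\theta}:\theta\in K\})>0$. Since $s>0$, a set with positive $\mathcal H^s$ measure has positive logarithmic capacity (Frostman's lemma plus the integrability of $\log$ against a measure with $\mu(\Delta(z,r))\le Cr^s$). So the whole task is to realize the directions $e^{i\theta}$, $\theta\in K$, as exceptional.

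The mechanism we plan to use is the following. If $\phi$, continued along the segment $[0,e^{i\theta})$, has $\phi(te^{i\theta})\to\infty$ as $t\to1^-$, then the continuation cannot extend through $e^{i\theta}$, since $f(\phi(w))=w$ forces $\phi$ to remain finite wherever it is analytic. We therefore want an entire $f$ together with a simply connected region $\Omega\ni z_0$ such that $f$ maps $\Omega$ conformally onto a domain $G$. The domain $G$ should contain $0$ and all the segments $[0,e^{i\theta})$, $\theta\in K$, and its boundary should reach $e^{i\theta}$ only through ends of $\Omega$ going to infinity. A natural choice is $G=\D\setminus\bigcup_n S_n$, where $S_n$ are radial slits removed over the complementary gaps of the Cantor construction, which prevents $G$ from covering the whole disc. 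The continuation of $\phi$ is then $(f|_\Omega)^{-1}$.

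To construct $f$, we first build a quasiregular model. On $\Omega$ the map is the conformal map onto $G$. Outside $\Omega$ we glue in standard pieces: copies of $\exp$ over half-planes to cover the complement, and "tracts" attached along each slit, each slit being a logarithmic singularity over its endpoint. The combinatorics is an infinite tree, in the style of a MacLane–Vinberg or Bishop quasiconformal-folding construction. We then apply the measurable Riemann mapping theorem to obtain $f=g\circ\psi^{-1}$. The key analytic requirement is that the dilatation support has finite $\int |\mu|\,|z|^{-2}$ (Teichmüller–Wittich–Belinskii), or at least that it is small enough for $\psi$ to be near-conformal at infinity. This keeps $\psi$ proper and guarantees that the curves $\phi([0,e^{i\theta}))$ still escape to $\infty$. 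The exceptional property is purely topological once $\Omega$ and its image are known: the ends of $\Omega$ over the accumulation points $e^{i\theta}$, $\theta\in K$, tend to infinity. Transcendence is automatic, since a polynomial has only finitely many critical values and no asymptotic values, while $f$ has asymptotic values at the slit endpoints.

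The main obstacle is the compatibility between the Cantor geometry and the dilatation estimate. Gross's theorem says the exceptional set must have measure zero, so the gluing must become increasingly degenerate as $\ell_n\to0$. We must check that the $2^n$ tracts at generation $n$, which live in regions of angular width about $\ell_n$, can be attached with dilatation supported on sets of controlled area. The plan is to place generation $n$ at radius $R_n$ growing fast (say $\log R_n\gg n$) so that the total $\sum_n\int_{\text{gen }n}|z|^{-2}$ converges. The estimate we expect to need is a modulus bound: the annulus in $\Omega$ separating the generation-$n$ pieces must have modulus $\gtrsim\log(1/\ell_n)$. This is where the restriction $s<1$ enters, since it allows $\ell_n$ to decay geometrically while the count $2^n$ stays compatible with summability. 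We would attempt this by explicit conformal maps of strips, with the slits straightened by $\Log$, before passing to the general quasiconformal correction.
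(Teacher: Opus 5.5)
There is a genuine gap in the step you treat as a desideratum: the existence of a simply connected $\Omega$ with $f|_\Omega$ conformal onto $G$, whose ends over the points $e^{i\theta}$, $\theta\in K$, tend to $\infty$. This is not a topological matter. Let $\chi_G\colon\D\to G$ be a Riemann map. Each segment $[0,e^{i\theta})$ pulls back to a curve landing at a point $\beta(\theta)\in\T$. The map $(f|_\Omega)^{-1}\circ\chi_G$ is univalent from $\D$ into $\C$ and tends to $\infty$ along each of these curves. By Lindel\"of's theorem for normal functions, it therefore has angular limit $\infty$ at every point of $B_G=\beta(K)$. Beurling's theorem says a conformal map of $\D$ into $\C$ has finite angular limits outside a set of capacity zero, so $\caplog B_G=0$ is forced.

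So the domain must be built so that harmonic measure crushes a Cantor set of positive capacity onto a set of capacity zero. You must then actually produce a univalent map that blows up on that set. This is the core of the paper:
\begin{enumerate}
\item The level-$n$ channels of $D$ have width $\asymp a^n$ and length $\asymp b^n$ with $b>2a$. This is possible precisely because $s<1$ gives $a<1/2$.
\item Hence the extremal distance to $\gamma_v$ is $\gtrsim(b/a)^n$, and the pieces $B_v$ have diameter at most $\delta_n=C\exp\{-c(b/a)^n\}$.
\item Then $2^{-n}\log(1/\delta_n)\to\infty$, which gives $\caplog B=0$.
\item Finally $Q=\zeta e^{2A}$, with $A$ the logarithmic potential of the discrete measure $\mu$ on $B$, is starlike univalent and tends to $\infty$ uniformly near $B$ because $qb>a$.
\end{enumerate}
Your slit domain could only work with slit lengths tuned in this way. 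The bound you propose, a modulus $\gtrsim\log(1/\ell_n)$, is far from sufficient. It only yields $\delta_n\lesssim\ell_n^{c}$, and with $2^n$ pieces of that size $2^{-n}\log(1/\delta_n)\to0$, so nothing forces capacity zero.

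The second step is also left undone, and the obstacle you single out is not the real one. If $F$ is the quasiregular model and $\psi$ the quasiconformal homeomorphism of $\C$ with $F=f\circ\psi$, the inverse branch is $\psi\circ(F|_\Omega)^{-1}$. Every quasiconformal self-map of $\C$ is proper, so escape to $\infty$ is automatic and the Teichm\"uller--Wittich--Belinskii condition is irrelevant. What must be proved is that the model exists with $\|\mu_F\|_\infty<1$. That is, the boundary correspondence of a conformal map onto a domain whose boundary degenerates at every scale ($2^n$ tracts of angular width $\asymp\ell_n$ at level $n$) must extend quasiregularly across $\partial\Omega$ with uniformly bounded dilatation. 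The proposal contains no construction or estimate for this, and it is essentially the whole difficulty.

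The paper avoids exact conformality altogether. It only needs $|f-g|<\frac18\eps\, d_D(te^{i\theta})$ on the images $\Phi(T_{\theta,t})$ of small discs along the rays. It gets this from Arakelian approximation on a set $F\subset\Omega$ containing these images, via $f=e^HG$ with $H\approx-P$. This works because the exponential growth of $\Real{A}$ on the discs beats the linear growth of $\log(1/\rho_\theta(t))$. Rouch\'e's theorem then gives the continuation along each ray with $z_\theta(t)\to\infty$.
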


Our construction also addresses an earlier question of Hayman, recorded by
Ohtsuka in 1954, which asks whether, for a prescribed
$0<\alpha<1$, there is an entire function whose inverse germ has an
exceptional set of positive $\alpha$-dimensional Hausdorff measure
\cite[Section~1, Remark~3]{Ohtsuka1954}. \thmref{thm:main} gives such an example.

In \secref{sec:bounded-domain}, we construct a bounded starlike domain and
establish the capacity estimates for its boundary correspondence. This
domain is used in \secref{sec:conformal-map} to construct the conformal map
and the closed set needed for approximation by entire functions. We carry
out this approximation in \secref{sec:entire-function} and prove
\thmref{thm:main}.

\section{Construction of the domain}\label{sec:bounded-domain}

Fix $0<s<1$. We first construct a compact set $K$ of angles. Choose
\begin{equation}\label{eq:constants}
 a=2^{-1/s},
 \qquad
 2a<b<1,
 \qquad
 \frac ab<q<\frac12,
 \qquad
 \frac12<\tau<\min\left\{\frac34,\frac1{4a}\right\}.
\end{equation}
These choices are possible since $a<1/2$. For $n=0,1,2,\ldots$, put
\[
 \ell_0=\frac14,
 \qquad
 \ell_n=\ell_0a^n.
\]
All constants below may depend on these fixed parameters.

A binary word is a finite sequence with entries in $\{0,1\}$.
We denote its length by $|v|$ and the empty word by $\varnothing$.
Let $J_\varnothing$ be the closed interval of length $\ell_0$ centred at
zero. If $J_v$ has length $\ell_n$ and centre $c_v$, define $J_{v0}$ and
$J_{v1}$ to have length $a\ell_n$ and centres
\[
 c_v-\frac{\ell_n}{4}
 \quad\hbox{and}\quad
 c_v+\frac{\ell_n}{4},
\]
respectively. Set
\begin{equation}\label{eq:Iv-Un-K}
 I_v=(c_v-\tau\ell_n,c_v+\tau\ell_n),
 \qquad
 U_n=\bigcup_{|v|=n}I_v,
 \qquad
 K=\bigcap_{n\geq0}\bigcup_{|v|=n}J_v.
\end{equation}

The intervals $J_v$ with $|v|=n$ are called the intervals of level $n$.
For $\theta\in K$, let $v_n(\theta)$ be the unique word $v\in\{0,1\}^n$
for which $\theta\in J_v$. The intervals $I_v$ at any fixed level are
mutually disjoint. Moreover,
\begin{equation}\label{eq:interval-separation}
 \overline{I_{vj}}\subset\operatorname{int}J_v
 \quad (j=0,1),
 \qquad
 \dist(\theta,\partial I_v)\geq\left(\tau-\frac12\right)\ell_n
\end{equation}
whenever $\theta\in K\cap J_v$.

For a nonempty compact set $X\subset\C$, let $\mathcal P(X)$ denote the
probability measures supported on $X$. The logarithmic energy of
$\nu\in\mathcal P(X)$ and the logarithmic capacity of $X$ are
\[
 I(\nu)=\iint\log\frac1{|x-y|}\dd\nu(x)\dd\nu(y),
 \qquad
 \caplog X=\exp\left(-\inf_{\nu\in\mathcal P(X)}I(\nu)\right).
\]

Thus $\caplog X>0$ if and only if $X$ supports a probability measure of
finite logarithmic energy. For arbitrary sets, we use outer logarithmic
capacity.

\begin{lemma}\label{lem:positive-capacity}
The compact set
\[
 E=\{e^{i\theta}:\theta\in K\}
\]

has positive $s$-dimensional Hausdorff measure and positive logarithmic
capacity.
\end{lemma}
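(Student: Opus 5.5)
The plan is to use the natural self-similar measure on $K$ and derive both conclusions from a single mass distribution estimate. Let $\mu$ be the probability measure on $K$ that gives mass $2^{-n}$ to each $J_v\cap K$ with $|v|=n$; it exists by the usual Kolmogorov/weak-limit argument, since the level-$n$ intervals are disjoint and nested. Since $\ell_n=\ell_0a^n$ and $a^s=1/2$, we have $2^{-n}=a^{ns}=(\ell_n/\ell_0)^s$. Let $\nu$ be the push-forward of $\mu$ under $\theta\mapsto e^{i\theta}$. Because $K\subset J_\varnothing=[-1/8,1/8]$, this map is bi-Lipschitz on $K$, with $|e^{i\theta}-e^{i\eta}|\ge c|\theta-\eta|$ for an absolute constant $c>0$. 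So it suffices to prove everything for $K\subset\mathbb R$ and $\mu$, and then transfer: Hausdorff measure changes by at most a bounded factor, and the energy changes by at most an additive constant.

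The key step is a Frostman bound $\mu([x-r,x+r])\le Cr^s$ for all $x\in\mathbb R$ and $0<r\le\ell_0$. Let $g=\ell_n/4-a\ell_n/2=\ell_n(1/2-a)/2>0$ be the gap between the two children $J_{v0}$ and $J_{v1}$ of a level-$n$ interval. Distinct level-$(n+1)$ intervals are separated by at least the gap at their first common ancestor, and this is at least $g_{n}:=\ell_n(1/2-a)/2$. Now take $n$ with $g_{n+1}\le 2r<g_n$. An interval of length $2r$ then meets at most one level-$(n+1)$ interval $J_w$, because two such intervals are at least $g_n>2r$ apart. Hence $\mu([x-r,x+r])\le 2^{-(n+1)}=(\ell_{n+1}/\ell_0)^s\le C r^s$, using that $r\asymp\ell_{n+1}$ with constants depending only on $a$. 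The case $r$ larger than $g_0$ is trivial.

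With the Frostman bound in hand, both conclusions follow by standard arguments.

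\emph{Hausdorff measure.} For any cover of $K$ by sets $A_j$ of diameter $d_j\le\ell_0$, we get $1=\mu(K)\le\sum_j\mu(A_j)\le C\sum_j d_j^s$. Therefore $\mathcal H^s(K)\ge 1/C>0$; this is the mass distribution principle.

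\emph{Capacity.} Write
\[
 \int\log\frac1{|x-y|}\dd\mu(y)=\int_0^\infty\mu\bigl(\{y:|x-y|<e^{-t}\}\bigr)\dd t
\]
for the part where $|x-y|<1$; the rest is bounded because $\diam K<1$. This integral is at most $\int_0^\infty\min(1,Ce^{-st})\dd t<\infty$, uniformly in $x$. Hence $I(\mu)<\infty$, so $\caplog K>0$. Transferring to $\nu$ via the bi-Lipschitz bound gives $I(\nu)\le I(\mu)+\log(1/c)<\infty$ and $\mathcal H^s(E)\ge c^s\mathcal H^s(K)>0$.

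The only step that needs care is the gap bookkeeping in the Frostman estimate. One has to check that the gap between siblings is positive, which uses $a<1/2$, and that it dominates the separation between all level-$(n+1)$ intervals, not just siblings. Both facts follow from the nesting $J_{vj}\subset J_v$: the children lie inside $J_v$ because $\ell_n/4+a\ell_n/2<\ell_n/2$. The constants $b,q,\tau$ play no role in this lemma.
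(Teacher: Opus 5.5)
Your proof is correct and follows the paper's argument. Both use the same self-similar measure, a Frostman bound of the form $\mu(L)\le C|L|^s$, the mass distribution principle, the layer-cake estimate for the energy, and bi-Lipschitz transfer to $E$. The only difference is in the Frostman step: you use the separation of level-$(n+1)$ intervals, whereas the paper notes that an interval shorter than $\ell_{n-1}$ meets at most two level-$(n-1)$ intervals. Two harmless slips: your $g$ is actually half the sibling gap $\ell_n(1/2-a)$, and the trivial case should read $2r\ge g_0$ rather than $r>g_0$.
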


\begin{proof}
Let $\nu$ be the probability measure on $K$ determined by
$\nu(K\cap J_v)=2^{-|v|}$. If $n\geq1$ and an interval
$L\subset\mathbb R$ has length $\ell_n\leq|L|<\ell_{n-1}$, then $L$
meets at most two intervals of level $n-1$, since meeting three would
require it to contain the middle interval, whose length is $\ell_{n-1}$.
Thus $L$ meets at most four intervals of level $n$.
Consequently,
\begin{equation}\label{eq:frostman-K}
 \nu(L)\leq 4\cdot2^{-n}
 =4\left(\frac{\ell_n}{\ell_0}\right)^s
 \leq C_s|L|^s.
\end{equation}

Here one may take $C_s=4\ell_0^{-s}$. For $|L|\geq\ell_0$, the same
upper bound follows from $\nu(L)\leq1$.
For every cover of $K$ by intervals $L_j$,
\[
 1=\nu(K)\leq\sum_j\nu(L_j)
 \leq C_s\sum_j|L_j|^s.
\]

Hence $K$ has positive $s$-dimensional Hausdorff measure. The
exponential map is bi-Lipschitz on $J_\varnothing$, so the same is true
of $E$.

For $\theta,\eta\in K$, we have $|\theta-\eta|\leq\ell_0<1$. Thus,
for each $\theta\in K$, Tonelli's theorem and \eqref{eq:frostman-K} give
\[
 \int\log\frac1{|\theta-\eta|}\dd\nu(\eta)
 =\int_0^\infty
   \nu\bigl(\{\eta:|\theta-\eta|<e^{-u}\}\bigr)\dd u
 \leq 2^sC_s\int_0^\infty e^{-su}\dd u<+\infty.
\]

Hence $I(\nu)<+\infty$. Define a probability measure $\eta$ on $E$ by
$\eta(M)=\nu(\{\theta\in K:e^{i\theta}\in M\})$ for every Borel set
$M\subseteq E$. Since the exponential map is bi-Lipschitz on
$J_\varnothing$, the measure $\eta$ also has finite logarithmic energy.
It follows that $\caplog E>0$.
\end{proof}

For $n=0,1,2,\ldots$, define
\begin{equation}\label{eq:radii}
 r_n=1-\frac{b^n}{2},
 \qquad
 h_n=r_{n+1}-r_n=\frac{(1-b)b^n}{2}.
\end{equation}

Consider the bounded starlike domain
\begin{equation}\label{eq:D}
 D=\Delta(0,r_0)\cup
 \bigcup_{n\geq0}
 \{re^{i\theta}:0<r<r_{n+1},\ \theta\in U_n\}.
\end{equation}

Each set in \eqref{eq:D} is open, so $D$ is open. Also, $[0,z]\subset D$
for every $z\in D$, so $D$ is starlike with respect to the origin. The nesting
in \eqref{eq:interval-separation} gives
\[
 \{\theta\in(-\pi,\pi):re^{i\theta}\in D\}=U_n
 \quad\hbox{when }r_n\leq r<r_{n+1}.
\]

In particular,
\begin{equation}\label{eq:radial-segments}
 [0,e^{i\theta})\subset D,
 \qquad
 e^{i\theta}\in\partial D
 \quad (\theta\in K).
\end{equation}

By a crosscut of a domain, we mean a simple open arc in the domain whose
closure is a closed arc with two distinct endpoints on its boundary.
For $|v|=n$, put
\begin{equation}\label{eq:crosscuts}
 x_n=r_n+\frac{h_n}{4},
 \qquad
 y_n=r_n+\frac{3h_n}{4},
 \qquad
 \gamma_v=\{y_ne^{i\theta}:\theta\in I_v\}.
\end{equation}

The arc $\gamma_v$ is a crosscut of $D$. Let $D_v^+$ denote the component
of $D\setminus\gamma_v$ that does not contain the origin.

For a family $\Gamma$ of locally rectifiable curves, its $2$-modulus and
extremal length are
\[
 \Mod\Gamma=\inf_\rho\int_\C\rho^2\dd A,
 \qquad
 \lambda(\Gamma)=\frac1{\Mod\Gamma}.
\]

Here $dA$ denotes planar area measure, and the infimum is taken over
nonnegative Borel functions $\rho$ such that $\int_\gamma\rho\,ds\geq1$
for every $\gamma\in\Gamma$. For $A_1,A_2\subseteq G$, let
$\Gamma_G(A_1,A_2)$ be the family of locally rectifiable curves in $G$
that join $A_1$ to $A_2$. We write
\[
 \operatorname{Mod}_{2,G}(A_1,A_2)=\Mod\Gamma_G(A_1,A_2),
 \qquad
 \lambda_G(A_1,A_2)=\frac1{\operatorname{Mod}_{2,G}(A_1,A_2)},
\]
where $\lambda_G(A_1,A_2)$ is the extremal distance between $A_1$ and $A_2$
in $G$.

We now estimate the extremal distance from a fixed disc about the origin
to the crosscuts $\gamma_v$. Fix $0<r_*<r_0$ and put
$C_{\mathrm{cen}}=\overline{\Delta(0,r_*)}$. Every curve from
$C_{\mathrm{cen}}$ to
$\gamma_v$ crosses the quadrilateral
\[
 \{z:x_n<|z|<y_n,\ \arg z\in I_v\}
\]
from its side on $|z|=x_n$ to its side on $|z|=y_n$. Applying
$z\mapsto\log z$ and the formula for the extremal distance between opposite
sides of a rectangle, we obtain
\begin{equation}\label{eq:large-extremal-distance}
 \lambda_D(C_{\mathrm{cen}},\gamma_v)
 \geq\frac{\log(y_n/x_n)}{|I_v|}
 \geq\frac{h_n}{4\tau\ell_0a^n}
 =\frac{1-b}{8\tau\ell_0}\left(\frac ba\right)^n.
\end{equation}

We shall use the following estimate to bound the diameter of the component
of $\D$ that lies beyond each crosscut under a Riemann map.

\begin{lemma}\label{lem:component-diameter}
Fix $0<\rho<1$. Let $\gamma$ be a crosscut of $\D$ that is disjoint from
$\overline{\Delta(0,\rho)}$, and let $V$ be the component of
$\D\setminus\gamma$ that does not contain this disc. There are constants
$c_\rho,C_\rho>0$ such that
\begin{equation}\label{eq:component-diameter}
 \diam V\leq C_\rho
 \exp\{-c_\rho\lambda_\D(\overline{\Delta(0,\rho)},\gamma)\}.
\end{equation}
\end{lemma}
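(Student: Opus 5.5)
We treat the two regimes separately. The core is an upper bound for the extremal distance from $\overline{\Delta(0,\rho)}$ to $\gamma$ in terms of how far $\gamma$ reaches towards the disc, or equivalently a lower bound for the modulus.

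\textbf{Trivial regime.} Since $\diam V\le 2$, it suffices to take $C_\rho\ge 2e^{c_\rho M}$ to cover all cases where $\lambda:=\lambda_\D(\overline{\Delta(0,\rho)},\gamma)\le M$. Here $M$ is a fixed threshold depending on $\rho$, to be chosen below. So we may assume $\lambda$ is large.

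\textbf{Locating $V$.} Put $d=\diam V$. The set $V$ is bounded by $\gamma$ together with an arc of $\T$, and it does not contain the disc.
\begin{enumerate}
\item If $d$ is comparable to $\diam\gamma$, then $\gamma$ has diameter at least a constant times $d$.
\item Otherwise $V$ contains a large boundary arc of $\T$ while $\gamma$ stays small. In that case $\gamma$ would separate the disc from most of $\T$, which is impossible for small $\gamma$ and $\rho<1$ fixed.
\end{enumerate}
Concretely, I would show that $\overline V\cap\T$ is an arc whose endpoints are the endpoints of $\gamma$. Either this arc has length at most $\pi$, so that $d\lesssim\diam\gamma$ plus the chord, which is again controlled by $\diam\gamma$ since the endpoints lie on $\gamma$. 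Or it exceeds $\pi$, and then $V$ must contain the disc: the complementary component, being cut off by a curve near a short chord, would be small. A cleaner route is to use the hyperbolic geometry of the situation. In all cases $d\le C\diam\gamma$, where the constant depends on $\rho$, once $\diam\gamma$ is small. If $\diam\gamma$ is not small, we are in the trivial regime after the modulus estimate below.

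\textbf{Modulus lower bound.} This is the main obstacle: showing $\lambda\le c^{-1}\log(1/\diam\gamma)+C$ when $\gamma$ is a connected set of diameter $\delta$ that touches $\T$. I plan a symmetrisation or comparison argument. Pick an endpoint $\zeta\in\T$ of $\gamma$, which lies in $\overline\gamma$.
\begin{enumerate}
\item The continuum $\overline\gamma$ meets every circle $|z-\zeta|=t$ for $0<t<\delta$.
\item Consider the curves consisting of the radial segment from $\Delta(0,\rho)$ towards $\zeta$, up to distance $t$ from $\zeta$, followed by the arc of $|z-\zeta|=t$ inside $\D$, which hits $\gamma$.
\end{enumerate}
These give at least the modulus of the family of circular arcs $\{|z-\zeta|=t\}\cap\D$ for $t\in(\delta',\delta)$, which is about $\frac1\pi\log(\delta/\delta')$, in series with a fixed-size connection. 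More directly, I would use the standard estimate
\[
\operatorname{Mod}_{2,\D}(\overline{\Delta(0,\rho)},\gamma)\ge \frac{c}{\log(C/\delta)},
\]
obtained by applying the Teichm\"uller or Gr\"otzsch extremal-ring bound to $\D$ reflected across $\T$. The reflection doubles the modulus and turns the problem into separating two continua in a planar ring: $\overline{\Delta(0,\rho)}$ together with its reflection, and $\gamma\cup\gamma^*$, a continuum of diameter of order $\delta$ near $\T$. The Teichm\"uller ring estimate then gives modulus $\ge c/\log(C/\delta)$.

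\textbf{Conclusion.} The modulus bound gives $\lambda\le C'\log(C/\delta)$, so $\delta\le C e^{-\lambda/C'}$. Combined with $d\lesssim\delta$, this yields \eqref{eq:component-diameter}. The delicate points are the reflection comparison, namely that curves in the doubled domain can be folded back into $\D$ without losing modulus by more than a factor $2$, and the topological step bounding $\diam V$ by $\diam\gamma$.
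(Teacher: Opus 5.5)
Your argument is correct, but it ties $\diam V$ to the modulus in a different way from the paper. The paper never locates $V$. It takes a compact arc $F\subset V$ through two points at distance greater than $d/2$ and notes that every curve from $\overline{\Delta(0,\rho)}$ to $F$ must cross $\gamma$, so that $\lambda_\D(\overline{\Delta(0,\rho)},\gamma)\le\lambda_\D(\overline{\Delta(0,\rho)},F)$. It then applies reflection across $\T$ and the plane-continuum bound to $F$, whose diameter is comparable to $d$ by construction. You apply the same two tools to $\gamma$ itself, so you need the separate topological fact $\diam V\le C\diam\gamma$. That fact is true, and it has a short proof that should replace the appeal to hyperbolic geometry. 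Each component of $\D\setminus\gamma$ is bounded by $\overline\gamma$ and an arc of $\T$ joining the endpoints of $\gamma$, and a bounded domain has the same diameter as its boundary. Hence the component whose arc has length at most $\pi$ has diameter at most $2\diam\gamma$. When $\diam\gamma<\rho$ it cannot contain $\overline{\Delta(0,\rho)}$, so it is $V$. The paper's device makes this topology unnecessary. Your route instead shows directly that $\gamma$ itself is exponentially small, which is equivalent because $\gamma\subset\partial V$.

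Three points need tightening.
\begin{enumerate}
\item Discard the first sketch with radial segments followed by circular arcs. All those curves contain the segment from $\rho\zeta$ to $(1-\delta)\zeta$. Putting density $+\infty$ on that segment shows the family has modulus zero, so it gives no lower bound.
\item In the reflection step, $\overline{\Delta(0,\rho)}$ together with its reflection $\{|z|\ge1/\rho\}$ is not a continuum, and the reflected configuration does not bound a ring. What reflection actually gives, after discarding the reflected copies by monotonicity, is $\operatorname{Mod}_{2,\C}(\overline{\Delta(0,\rho)},\overline\gamma)\le2\operatorname{Mod}_{2,\D}(\overline{\Delta(0,\rho)},\gamma)$. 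The continuum estimate is then applied to $\overline{\Delta(0,\rho)}$ and $\overline\gamma$.
\item Folding curves back into $\D$ runs into curves that touch $\T$. The paper instead reflects Dirichlet potentials smoothed up to $\T$, and that argument needs both sets to be compact in $\D$. Apply it to a compact subarc of $\gamma$ of diameter at least $\frac12\diam\gamma$; by monotonicity this costs nothing.
\end{enumerate}
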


\begin{proof}
Write $d=\diam V$. Choose two points of $V$ at distance greater than
$d/2$ and join them by a compact arc $F\subset V$. Every curve from
$\overline{\Delta(0,\rho)}$ to $F$ contains a subcurve from
$\overline{\Delta(0,\rho)}$ to $\gamma$. It follows that
\begin{equation}\label{eq:extremal-monotonicity}
 \lambda_\D(\overline{\Delta(0,\rho)},\gamma)
 \leq
 \lambda_\D(\overline{\Delta(0,\rho)},F).
\end{equation}

The modulus of the curves that join $\overline{\Delta(0,\rho)}$ to $F$
is the infimum of the Dirichlet energies
$\int_\D|\nabla u|^2\dd A$ over continuous functions
$u\in W^{1,2}(\D)$ that satisfy $0\leq u\leq1$, $u=0$ on
$\overline{\Delta(0,\rho)}$, and $u=1$ on $F$
\cite[Section~2.D]{BuckleyHerron2007}. Here $W^{1,2}(\D)$ is the Sobolev
space of square-integrable functions with square-integrable first weak
derivatives.

The same infimum is obtained by functions that are smooth on a neighborhood
of $\overline\D$. To verify this, for $0<\varepsilon<1/2$ set
\[
 u_\varepsilon=\min\left\{1,\max\left\{0,
 \frac{u-\varepsilon}{1-2\varepsilon}\right\}\right\}.
\]
This function is constant in neighborhoods of the two compact sets, and
its energy is at most $(1-2\varepsilon)^{-2}$ times that of $u$.
For $0<r<1$ sufficiently close to one, the function
$u_{\varepsilon,r}(z)=u_\varepsilon(rz)$ preserves these constant values
and has no larger energy on $\D$. It is defined on a neighborhood of
$\overline\D$. Convolution with a nonnegative smooth kernel of integral
one and sufficiently small support preserves the constant values. As the
support shrinks to $\{0\}$, the resulting energies on $\D$ converge to
$\int_\D|\nabla u_{\varepsilon,r}|^2\dd A$. Letting $r\to1$ and
$\varepsilon\to0$ proves the equality of the two infima.

For such a smooth potential, define
\[
 U(z)=
 \begin{cases}
 u(z),& |z|\leq1,\\
 u(1/\overline z),& |z|>1.
 \end{cases}
\]

This function is continuous, has square-integrable weak derivatives, and
has the prescribed values on both compact sets. Inversion in $\T$
preserves Dirichlet energy, so
\[
 \int_\C|\nabla U|^2\dd A=2\int_\D|\nabla u|^2\dd A.
\]

Taking the infimum over admissible potentials gives
\begin{equation}\label{eq:reflection-modulus}
 \operatorname{Mod}_{2,\D}(\overline{\Delta(0,\rho)},F)
 \geq\frac12
 \operatorname{Mod}_{2,\C}(\overline{\Delta(0,\rho)},F).
\end{equation}

By a continuum, we mean a nonempty compact connected set. It is nondegenerate
if it contains more than one point. A modulus estimate for two plane
continua
\cite[Lemma~2.2(b) and (e)]{BuckleyHerron2007} gives
\begin{equation}\label{eq:plane-continuum-modulus}
 \operatorname{Mod}_{2,\C}(A_1,A_2)
 \geq\frac{c}{\log(2+\Delta(A_1,A_2))},
 \qquad
 \Delta(A_1,A_2)=
 \frac{\dist(A_1,A_2)}
 {\min\{\diam A_1,\diam A_2\}}
\end{equation}
for disjoint nondegenerate plane continua $A_1$ and $A_2$, with an
absolute constant $c>0$. In our application the first continuum is fixed,
$\diam F>d/2$, and both continua lie in $\overline\D$. Equations
\eqref{eq:reflection-modulus} and \eqref{eq:plane-continuum-modulus} imply
\[
 \lambda_\D(\overline{\Delta(0,\rho)},F)
 \leq C_\rho\log\frac{C_\rho}{d}.
\]

Combining this inequality with \eqref{eq:extremal-monotonicity} and
exponentiating proves \eqref{eq:component-diameter}, with positive constants that
depend only on $\rho$.
\end{proof}

Since $D$ is starlike, it is simply connected. Let
$\chi\colon\D\to D$ be conformal with $\chi(0)=0$. By Carath\'eodory's
prime end theorem, $\chi$ identifies $\T$ with the prime end boundary of
$D$. Choose $\rho>0$
so that
\[
 \chi(\overline{\Delta(0,\rho)})\subset C_{\mathrm{cen}}.
\]

Each endpoint of $\gamma_v$ lies in the interior of a straight radial
segment of $\partial D$. Schwarz reflection extends $\chi^{-1}$
conformally across these segments, so $\chi^{-1}(\gamma_v)$ is a crosscut of $\D$.
Let $V_v$ be the component of $\D\setminus\chi^{-1}(\gamma_v)$ that does
not contain $\overline{\Delta(0,\rho)}$. Conformal invariance of extremal
distance,
\eqref{eq:large-extremal-distance}, and \lemref{lem:component-diameter} give
\begin{equation}\label{eq:delta-n}
 \diam\overline{V_v}\leq\delta_n,
 \qquad
 \delta_n=C\exp\left\{-c\left(\frac ba\right)^n\right\},
 \qquad |v|=n.
\end{equation}

For $\theta\in K$, the nonempty compact sets
$\overline{V_{v_n(\theta)}}$ are nested and their diameters tend to zero.
Their intersection therefore consists of one point, which we denote by
$\beta(\theta)$. Since
$\chi(V_{v_n(\theta)})=D_{v_n(\theta)}^+$ lies outside the circle of radius
$y_n$, an interior point $\beta(\theta)\in\D$ would satisfy
$|\chi(\beta(\theta))|\geq y_n$ for every $n$. As $y_n\to1$, this is
impossible. Thus $\beta(\theta)\in\T$. In $D$ one
has
\[
 \diam D_v^+\leq 1-y_n+|I_v|\leq C(b^n+a^n).
\]

The crosscuts $\gamma_{v_n(\theta)}$ have pairwise disjoint closures,
are nested towards $e^{i\theta}$, and have diameters tending to zero.
They therefore represent a prime end whose impression,
that is, the intersection $\bigcap_n\overline{D_{v_n(\theta)}^+}$, is
$\{e^{i\theta}\}$. Distinct values of $\theta$ give distinct prime ends.
Hence $\beta$ is injective. If two parameters in $K$ belong to the same
interval $J_v$ of level $n$, their images under $\beta$ lie in the same
set $\overline{V_v}$, whose diameter is at most $\delta_n$. Formula
\eqref{eq:delta-n} therefore also shows that $\beta$ is continuous. Put
\begin{equation}\label{eq:B-Bv}
 B=\beta(K),
 \qquad
 B_v=\beta(K\cap J_v).
\end{equation}

The sets $B_v$ form a disjoint level $n$ partition of $B$ and
\begin{equation}\label{eq:Bv-diameter}
 \diam B_v\leq\delta_n.
\end{equation}

Thus the intervals $J_v$ have length $\ell_0a^n$, whereas the corresponding
sets $B_v\subset\T$ have diameter at most $C\exp\{-c(b/a)^n\}$.

\begin{proposition}\label{prop:B-zero-capacity}
The compact set $B$ has logarithmic capacity zero.
\end{proposition}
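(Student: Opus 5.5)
We show that every probability measure on $B$ has infinite logarithmic energy. Equivalently, we show that $B$ can be covered by sets $B_v$ so small that no measure of finite energy fits. The structure is a Cantor-type set with $2^n$ pieces at level $n$, each of diameter at most $\delta_n=C\exp\{-c(b/a)^n\}$. Since $b/a>1$, the logarithm of the reciprocal of the diameter grows like $(b/a)^n$, much faster than $\log 2^n=n\log2$. The standard covering criterion for capacity zero then applies.

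\textbf{Step 1.} Fix $\mu\in\mathcal P(B)$. By \eqref{eq:Bv-diameter}, for $|v|=n$ and $x,y\in B_v$ we have
\[
 \log\frac1{|x-y|}\geq\log\frac1{\delta_n}\geq c\left(\frac ba\right)^n-\log C .
\]

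\textbf{Step 2.} Since the $B_v$ with $|v|=n$ partition $B$,
\[
 I(\mu)\geq\sum_{|v|=n}\iint_{B_v\times B_v}\log\frac1{|x-y|}\dd\mu\dd\mu-M\geq\left(c\left(\tfrac ba\right)^n-\log C\right)\sum_{|v|=n}\mu(B_v)^2-M.
\]
Here $M$ is a constant bounding the negative part of the kernel, which exists because $B\subset\T$ has diameter at most $2$, so $\log(1/|x-y|)\geq-\log2$. To be careful, we split the double integral into the diagonal blocks $B_v\times B_v$ and the off-diagonal part, and bound the off-diagonal part below by $-\log 2$.

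\textbf{Step 3.} By Cauchy--Schwarz, $\sum_{|v|=n}\mu(B_v)^2\geq 2^{-n}$. Hence
\[
 I(\mu)\geq c\left(\frac{b}{2a}\right)^n-C'.
\]
Since $b>2a$ by \eqref{eq:constants}, letting $n\to\infty$ gives $I(\mu)=+\infty$. Therefore $\caplog B=0$. This is exactly where the choice $2a<b$ enters.

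The main obstacle is that Cauchy--Schwarz only supplies the factor $2^{-n}$, so the argument needs $\delta_n$ to decay faster than $\exp\{-C2^n\}$. This holds precisely because $b/a>2$. The remaining ingredients are the disjointness of the $B_v$ and the diameter bound \eqref{eq:Bv-diameter}, both established just before the proposition; the rest is routine bookkeeping of constants.
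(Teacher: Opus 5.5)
Your proposal is correct and is essentially the paper's argument: restrict the energy to the diagonal blocks $B_v\times B_v$ at level $n$, use $\diam B_v\leq\delta_n$, apply Cauchy--Schwarz to get $\sum_{|v|=n}\mu(B_v)^2\geq 2^{-n}$, and let $n\to\infty$ using $b>2a$. The only difference is cosmetic: the paper uses the nonnegative kernel $\log(2/|x-y|)$ so that the off-diagonal terms can simply be dropped, whereas you bound them below by $-\log 2$; you should also state that Cauchy--Schwarz is applied only for $n$ large enough that $\delta_n<1$, so that the coefficient $\log(1/\delta_n)$ is positive.
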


\begin{proof}
Let $\sigma$ be a probability measure on $B$ and put
$m_v=\sigma(B_v)$. Since $|x-y|\leq2$ on $\T$, the kernel
$\log(2/|x-y|)$ is nonnegative. For every sufficiently large $n$, one has
$\delta_n<2$, and
\begin{align*}
 I(\sigma)+\log2
 &=\iint\log\frac2{|x-y|}\dd\sigma(x)\dd\sigma(y)\\
 &\geq\log\frac2{\delta_n}\sum_{|v|=n}m_v^2\\
 &\geq2^{-n}\log\frac2{\delta_n}.
\end{align*}

By \eqref{eq:delta-n}, the last expression is bounded below by
$c(b/(2a))^n-O(2^{-n})$ and tends to $+\infty$ because $b>2a$. Every
probability measure on $B$ has infinite energy. Thus $\caplog B=0$.
\end{proof}

We finish this section by estimating the radii of discs centered on the
radial segments $[0,e^{i\theta})$, where $\theta\in K$.

\begin{lemma}\label{lem:boundary-distance}
Write $d_D(z)=\dist(z,\partial D)$. There are constants $c_0,C_0>0$
such that
\begin{equation}\label{eq:boundary-distance}
 c_0a^{n+1}\leq d_D(te^{i\theta})\leq C_0a^n
\end{equation}
whenever $\theta\in K$ and $r_n\leq t<r_{n+1}$.
\end{lemma}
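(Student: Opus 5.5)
The two inequalities in \eqref{eq:boundary-distance} are handled separately, working in polar coordinates and using the explicit description of $D$ from \eqref{eq:D}. Fix $\theta\in K$, $n\ge0$ and $r_n\le t<r_{n+1}$, write $z=te^{i\theta}$, and let $v=v_n(\theta)$, so that $\theta\in J_v\subset I_v$.

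\emph{Upper bound.} I will exhibit a boundary point of $D$ within distance $C_0a^n$ of $z$, namely a point on a radial side of the sector over $I_v$. Consider the point $te^{i(c_v+\tau\ell_n)}$. Because the intervals $I_{v'}$ of level $n$ are disjoint and $c_v+\tau\ell_n\notin U_n$, the slice of $D$ at radius $t$ is exactly $U_n$, and this point does not lie in $D$. The segment from $z$ to this point starts in $D$, so it meets $\partial D$. Its length is at most $t\,|c_v+\tau\ell_n-\theta|\le 2\tau\ell_n$. This gives $d_D(z)\le 2\tau\ell_0a^n$. (For $n=0$ the bound is trivial anyway, since $D$ is bounded.)

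\emph{Lower bound.} I will show that $\Delta(z,c_0a^{n+1})\subset D$. Take $w=se^{i\eta}$ with $|w-z|<c_0a^{n+1}$. Then $|s-t|<c_0a^{n+1}$, and, since $t\ge r_0=1/2$, also $|\eta-\theta|\le C c_0a^{n+1}$. Split according to the radius of $w$.
\begin{enumerate}
\item If $s<r_{n+1}$, then the point $w$ lies in $D$ provided $\eta\in I_{v_k(\theta)}$ for the level $k\le n$ with $r_k\le s<r_{k+1}$. These intervals are nested, so it suffices to have $\eta\in I_{v_n(\theta)}$. This holds because $\dist(\theta,\partial I_v)\ge(\tau-\tfrac12)\ell_n$ by \eqref{eq:interval-separation}.
\item If $s\ge r_{n+1}$, I need $\eta\in I_{v_{n+1}(\theta)}$ (and deeper levels if $s\ge r_{n+2}$). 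Again by \eqref{eq:interval-separation}, at level $m$ we have $\dist(\theta,\partial I_{v_m(\theta)})\ge(\tau-\tfrac12)\ell_0a^{m}$. So choosing $c_0$ small makes $|\eta-\theta|<(\tau-\tfrac12)\ell_0a^{n+1}$, which handles level $n+1$.
\end{enumerate}

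The main obstacle is the deeper levels $m\ge n+2$ in the second case, since the angular margin at those levels shrinks to $a^m$. Here the radial gaps come to the rescue: reaching radius $r_{n+2}$ from $t<r_{n+1}$ requires $s-t\ge h_{n+1}=\tfrac{1-b}{2}b^{n+1}$. Since $b>2a>a$, this exceeds $c_0a^{n+1}$ once $c_0<\tfrac{1-b}{2}$. Hence $s<r_{n+2}$, and only levels $\le n+1$ matter. This closes the argument with $c_0$ depending only on $a,b,\tau,\ell_0$.
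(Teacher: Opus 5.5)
Your proof is correct and follows essentially the same route as the paper. For the upper bound, you use an endpoint of $I_{v_n(\theta)}$ at radius $t$. For the lower bound, the angular margin $(\tau-\frac12)\ell_{n+1}$ from \eqref{eq:interval-separation}, together with the radial gap $h_{n+1}\gg a^{n+1}$, keeps the small disc inside the sectors of levels $n$ and $n+1$; the paper additionally uses $h_n$ to restrict to $|k-n|\le1$.

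Your case (i) is slightly simpler than you make it. Since $0<s<r_{n+1}$ and $\eta\in U_n$, the point $w$ already lies in the $n$-th set in \eqref{eq:D}, and this also covers $s<r_0$, which your ``level $k\le n$ with $r_k\le s<r_{k+1}$'' phrasing omits.
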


\begin{proof}
Let $\alpha$ be an endpoint of $I_{v_n(\theta)}$. Then
$te^{i\alpha}\in\partial D$ and
$|te^{i\theta}-te^{i\alpha}|\leq t|\theta-\alpha|\leq C\ell_n$.
This proves the upper estimate.

For the lower estimate choose $\kappa>0$ so small that, for every $n$,
\[
 4\kappa a^{n+1}<\frac12\left(\tau-\frac12\right)\ell_{n+1},
 \qquad
 \kappa a^{n+1}<\tfrac14\min\{h_n,h_{n+1}\}.
\]

Such a choice is possible because $a/b<1$. Since $t\geq r_0=1/2$,
every $z\in\Delta(te^{i\theta},\kappa a^{n+1})$ satisfies
\[
 \lvert\arg z-\theta\rvert<4\kappa a^{n+1}
 <\frac12\left(\tau-\frac12\right)\ell_{n+1},
 \qquad
 \bigl||z|-t\bigr|<\kappa a^{n+1}.
\]

Here $\arg z\in(-\pi,\pi)$. The second inequality implies that either
$|z|<r_0$ or $r_k\leq|z|<r_{k+1}$ for some $k\geq0$ with $|k-n|\leq1$.
By \eqref{eq:interval-separation}, the first inequality places $\arg z$ in
$I_{v_{n+1}(\theta)}$. The nesting of the intervals then places it in
$I_{v_k(\theta)}$ whenever $|k-n|\leq1$. Thus every point of
$\Delta(te^{i\theta},\kappa a^{n+1})$ belongs to $D$, which proves the
lower estimate with $c_0=\kappa$.
\end{proof}

\section{Conformal mapping and an approximation set}\label{sec:conformal-map}

We now map $D$ conformally onto an unbounded starlike domain using a
construction similar to Mori's \cite[Theorem~2]{Mori1950}; see also
Ohtsuka \cite[Theorem~10]{Ohtsuka1954}.

The choice of measure below provides the estimates needed for
approximation by entire functions while preserving the continuations of
one inverse germ along all the prescribed radial segments.

For every word $v$ of length $n\geq1$, choose a point $\beta_v\in B_v$ and
define
\begin{equation}\label{eq:mu}
 \mu=c_q\sum_{n\geq1}q^n\sum_{|v|=n}\delta_{\beta_v},
 \qquad c_q=\frac{1-2q}{2q}.
\end{equation}

Here $\delta_\beta$ denotes the unit point mass at $\beta$. The measure
$\mu$ is a probability measure because
\[
 \|\mu\|=c_q\sum_{n\geq1}(2q)^n=1.
\]

The chosen points are dense in $B$, so $\operatorname{supp}\mu=B$. On
$\D$, define
\begin{equation}\label{eq:A}
 A(\zeta)=-\int_B\Log(1-\overline\xi\zeta)\dd\mu(\xi),
\end{equation}
where the branch is normalized by $\Log1=0$. The integral converges locally
uniformly and defines a holomorphic function. Notice that
$-\log|1-\overline\xi\zeta|>-\log2$ for $\xi\in B$ and $\zeta\in\D$,
since $|1-\overline\xi\zeta|<2$.

If $\zeta\in V_v$ and $|v|=n\geq1$, then
$|\zeta-\beta_v|\leq\delta_n$. We separate the summand
$c_qq^n\delta_{\beta_v}$ in \eqref{eq:mu} and use the preceding lower bound
to obtain
\begin{equation}\label{eq:A-on-Vv}
 \Real{A(\zeta)}
 \geq c_qq^n\log\frac1{\delta_n}-\log2
 \geq c_1\left(\frac{qb}{a}\right)^n-C_1.
\end{equation}

To obtain uniform escape to infinity under the conformal map below, we
also need a lower bound valid for every $\zeta\in\D$ close to $B$.
Suppose that $n\geq1$, $\zeta\in\D$, and $\dist(\zeta,B)<\delta_n$.
Choose $\beta\in B$ with
$|\zeta-\beta|<\delta_n$. Let $v$ be the unique word of length $n$ for
which $\beta\in B_v$. Then $|\beta-\beta_v|\leq\delta_n$. Hence
\begin{equation}\label{eq:A-unrestricted}
 \Real{A(\zeta)}
 \geq c_qq^n\log\frac1{2\delta_n}-\log2.
\end{equation}

Since $\delta_n=C\exp\{-c(b/a)^n\}$, the right-hand side is at least
$c(qb/a)^n-O(q^n)-\log2$ and tends to $+\infty$ because $qb>a$. We have
proved
\begin{equation}\label{eq:A-blowup}
 \Real{A(\zeta)}\longrightarrow+\infty
 \quad\hbox{uniformly as}\quad
 \dist(\zeta,B)\longrightarrow0,
 \qquad \zeta\in\D.
\end{equation}

Set
\begin{equation}\label{eq:Q}
 Q(\zeta)=\zeta\exp(2A(\zeta)).
\end{equation}

Since $\mu(B)=1$,
\begin{equation}\label{eq:starlike-criterion}
 \frac{\zeta Q'(\zeta)}{Q(\zeta)}
 =\int_B\frac{1+\overline\xi\zeta}
                 {1-\overline\xi\zeta}\dd\mu(\xi).
\end{equation}

The real part of the integrand equals
\[
 \frac{1-|\zeta|^2}{|1-\overline\xi\zeta|^2}>0.
\]

Moreover, $Q(0)=0$ and $Q'(0)=1$, and the quotient in
\eqref{eq:starlike-criterion} has its removable value $1$ at zero.
The analytic characterization of starlike functions states that a
holomorphic function $h$ with $h(0)=0$, $h'(0)\ne0$, and
$\Real{\zeta h'(\zeta)/h(\zeta)}>0$ on $\D$ is univalent and has an image
that is starlike with respect to the origin. It follows that $Q$ is univalent
and $\Omega=Q(\D)$ is starlike with respect to the origin. Since
$B\subset\T$, the condition $\dist(\zeta,B)\to0$ also forces
$|\zeta|\to1$. Equations \eqref{eq:A-blowup} and \eqref{eq:Q} give
\begin{equation}\label{eq:Q-blowup}
 |Q(\zeta)|\longrightarrow+\infty
 \quad\hbox{uniformly as}\quad
 \dist(\zeta,B)\longrightarrow0,
 \qquad \zeta\in\D.
\end{equation}

Let
\begin{equation}\label{eq:Phi-g}
 \Phi=Q\circ\chi^{-1}\colon D\longrightarrow\Omega,
 \qquad
 g=\Phi^{-1}=\chi\circ Q^{-1}.
\end{equation}

Choose $0<\eps<1/4$ so small that
$\eps C_0a^m<h_{m-1}/4$ for every $m\geq2$, and put
\begin{equation}\label{eq:closed-discs}
 \rho_\theta(t)=\eps d_D(te^{i\theta}),
 \qquad
 T_{\theta,t}=\overline{\Delta(te^{i\theta},\rho_\theta(t))}
\end{equation}
for $\theta\in K$ and $0\leq t<1$. These discs are compact subsets of $D$.
Since $\rho_\theta(0)=\eps d_D(0)$, the disc $T_{\theta,0}$ is the same
for every $\theta\in K$.

Suppose that $r_m\leq t<r_{m+1}$ and $m\geq2$. Put
$v=v_{m-1}(\theta)$. We have
\begin{equation}\label{eq:disc-crosscut-separation}
 r_m-y_{m-1}=\frac{h_{m-1}}4,
 \qquad
 \rho_\theta(t)\leq\eps C_0a^m,
 \qquad
 \frac{a^m}{h_{m-1}}
 =\frac{2a}{1-b}\left(\frac ab\right)^{m-1}.
\end{equation}

One fixed choice of $\eps$ makes the closed disc $T_{\theta,t}$ lie
beyond $\gamma_v$. The disc is connected and misses $\gamma_v$. It
follows that
\begin{equation}\label{eq:disc-in-Vv}
 \chi^{-1}(T_{\theta,t})\subset V_v.
\end{equation}

Equations \eqref{eq:A-on-Vv}, \eqref{eq:boundary-distance}, and
\eqref{eq:disc-in-Vv} imply
\begin{equation}\label{eq:A-disc-lower}
 \inf_{\xi\in T_{\theta,t}}
 \Real{A(\chi^{-1}(\xi))}
 \geq c_2\left(\frac{qb}{a}\right)^m-C_2
\end{equation}
and
\begin{equation}\label{eq:rho-upper-log}
 \log\frac2{\rho_\theta(t)}
 \leq C_3+(m+1)\log\frac1a.
\end{equation}

The first lower bound grows exponentially in $m$, while the right-hand side
of \eqref{eq:rho-upper-log} grows linearly. For every $t_0<1$, the discs
$T_{\theta,t}$ with $\theta\in K$ and $0\leq t\leq t_0$ lie in one
compact subset of $D$, and their radii have a positive lower bound.
There is consequently a real constant $C_*$ such that
\begin{equation}\label{eq:margin}
 \Real{A(\chi^{-1}(\xi))}+C_*
 \geq\log\frac2{\rho_\theta(t)}
\end{equation}
for every $\theta\in K$, $0\leq t<1$, and
$\xi\in T_{\theta,t}$.

The images of these discs tend uniformly to infinity as $t\to1^-$. Indeed,
if
$t_j\to1$, $\theta_j\in K$, and
$\xi_j\in T_{\theta_j,t_j}$, then \eqref{eq:disc-in-Vv} and
\eqref{eq:delta-n} imply
\[
 \dist(\chi^{-1}(\xi_j),B)\longrightarrow0.
\]

Equation \eqref{eq:Q-blowup} gives
\begin{equation}\label{eq:uniform-disc-escape}
 \inf_{\theta\in K}\inf_{\xi\in T_{\theta,t}}
 |\Phi(\xi)|\longrightarrow+\infty
 \quad (t\to1^-).
\end{equation}

We next find a closed set $F\subset\Omega$ that contains every image
$\Phi(T_{\theta,t})$ and on which continuous functions that are holomorphic
in the interior can be approximated uniformly by entire functions.
The geometry of the starlike domain $\Omega$ will ensure that its
complement has the required connectivity properties.

We use Arakelian's approximation theorem in the following classical form
\cite{Arakelian1964}.

\begin{theorem}[Arakelian]\label{thm:arakelian}
Let $F$ be a closed subset of $\C$, and let $\Chat$ denote the Riemann
sphere. The following conditions are equivalent.
\begin{enumerate}
\item The set $\Chat\setminus F$ is connected and locally connected at
infinity.
\item If $u$ is continuous on $F$ and holomorphic in $F^\circ$, then, for
every $\eta>0$, there is an entire function $U$ such that
\[
 |U-u|<\eta
 \quad\hbox{on }F.
\]
\end{enumerate}
A closed set that satisfies these conditions is called an Arakelian set.
\end{theorem}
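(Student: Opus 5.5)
The statement is the classical theorem of Arakelian \cite{Arakelian1964}, which the paper quotes rather than proves. If I had to reconstruct a proof, I would treat the two implications separately. The implication (ii)$\Rightarrow$(i) goes by contradiction using the maximum principle. The implication (i)$\Rightarrow$(ii) uses Mergelyan's theorem on an exhaustion, together with a localization device that glues the successive approximants.

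For (ii)$\Rightarrow$(i), first suppose $\Chat\setminus F$ has a component $G$ not containing $\infty$. Then $G$ is bounded and $\partial G\subset F$. Fix $a\in G$ and let $u(z)=1/(z-a)$, which is continuous on $F$ and holomorphic in $F^\circ$. An entire $U$ with $|U-u|<\eta$ on $F$ gives $|(z-a)U(z)-1|\leq\eta\,\diam G$ on $\partial G$. By the maximum principle this bound persists on $G$, but at $z=a$ the left side equals $1$, which is a contradiction for small $\eta$.

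Next suppose $\Chat\setminus F$ is connected but not locally connected at infinity. Then there are $R$ and a sequence of points $z_k\to\infty$ in $\C\setminus F$ that cannot be joined to $\infty$ in $\C\setminus(F\cup\overline{\Delta(0,R)})$. They can, however, be joined to $\overline{\Delta(0,R)}$, so they lie in bounded components $G_k$ of $\C\setminus(F\cup\overline{\Delta(0,R)})$ whose boundaries lie in $F\cup\partial\Delta(0,R)$. I would define $u$ as a locally constant function: $0$ on a neighbourhood in $F$ of the parts near $\overline{\Delta(0,R)}$, and a large value $M_k$ on the part of $\partial G_k$ far from the disc. A two-constant (harmonic measure) estimate on $G_k$ then forces $|U(z_k)|$ to be bounded by a quantity depending only on $\max_{|z|\le R'}|U|$. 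The obstruction is that the harmonic measure of the far boundary seen from the "neck" near $\Delta(0,R)$ is uniformly bounded away from $1$. Choosing $M_k\to\infty$ quickly then contradicts uniform approximability. This is the delicate part of the necessity, and I would follow Arakelian's original choice of $u$ here.

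For (i)$\Rightarrow$(ii), I would first reformulate local connectivity at infinity. Together with connectedness, it is equivalent to the following: for every $R$ there is $R'\geq R$ such that every bounded component of $\C\setminus(F\cup\overline{\Delta(0,R)})$ lies in $\Delta(0,R')$. This lets me choose radii $R_1<R_2<\cdots\to\infty$ and to fill holes. Define $\widehat K_n$ to be $\overline{\Delta(0,R_n)}$ together with all bounded complementary components of $F\cup\overline{\Delta(0,R_n)}$ that it meets. Then $L_n=\widehat K_n\cup(F\cap\overline{\Delta(0,R_{n+1})})$ has connected complement, so Mergelyan's theorem applies on $L_n$.

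I would then construct entire functions $U_n$ inductively, with $|U_n-u|<\eta(1-2^{-n})$ on $F\cap\overline{\Delta(0,R_n)}$ and $|U_{n+1}-U_n|<\eta2^{-n}$ on $\widehat K_{n-1}$. The limit $U=\lim U_n$ is then entire and satisfies $|U-u|\leq\eta$ on $F$.

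The main obstacle is the inductive step. One needs a function on $L_n$ that equals (approximately) $U_n$ near $\widehat K_{n-1}$ and $u$ on $F$ far out, is holomorphic on $L_n^\circ$, and differs from both by little. Naive gluing breaks holomorphy across interior points of $F$. I would handle this with Vitushkin's localization operator: for a cutoff $\varphi$ equal to $1$ near $\widehat K_{n-1}$,
\[
T_\varphi h(z)=\varphi(z)h(z)+\frac1\pi\int\frac{h(\zeta)-h(z)}{\zeta-z}\,\bar\partial\varphi(\zeta)\dd A(\zeta).
\]
I would apply $T_\varphi$ to $h=U_n-u$, extended continuously to $\C$. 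It preserves holomorphy wherever $h$ is holomorphic, and its size is controlled by the modulus of continuity of $h$ on $\operatorname{supp}\bar\partial\varphi$. Subtracting $T_\varphi(U_n-u)$ from $U_n$ then gives the desired function on $L_n$, to which Mergelyan's theorem is applied. Making these error bounds summable, by placing the transition annuli where $|U_n-u|$ is already small, is the key technical point.
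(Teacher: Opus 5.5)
The paper does not prove this statement. It quotes Arakelian's theorem and uses only (i)$\Rightarrow$(ii), through Lemma~\ref{lem:approximation-set}. So your sketch must stand on its own. Your first necessity step (a bounded component of $\C\setminus F$, the function $1/(z-a)$, and the maximum principle) is correct. Both hard steps, however, have genuine gaps.

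\emph{Necessity of local connectivity at infinity.} A locally constant $u$ equal to $0$ on some parts of $F$ and to $M_k$ on others does not exist in general. In the standard counterexample, $F$ is the complement of a disc with thin channels attached that lead to bulbs escaping to infinity. This $F$ is connected, so a locally constant $u$ is constant. Even granting such a $u$, the two-constant theorem gives only upper bounds for $|U|$ in $G_k$. Since $G_k\cap F=\varnothing$, nothing forces $U$ to be large there, so no contradiction arises. The contradiction has to come from singularities of $u$ placed inside the $G_k$. For example, take $u=\sum_k\bigl(r_k/(z-\zeta_k)\bigr)^k$, where:
\begin{itemize}
\item $\zeta_k\in G_k$ lies near a point of $\partial G_k\cap F$ of maximal modulus, so that the harmonic measure $\omega_k(\zeta_k)$ of $\partial G_k\cap\partial\Delta(0,R)$ in $G_k$ tends to $0$;
\item $r_k$ is the conformal radius of $G_k$ at $\zeta_k$;
\item $|\zeta_k|$ grows fast enough that the series converges locally uniformly off $\{\zeta_k\}$.
\end{itemize}
Then $\log|U-u|-k\,g_{G_k}(\cdot,\zeta_k)$ extends subharmonically to $G_k$ with value $0$ at $\zeta_k$. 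This gives $0\le\omega_k(\zeta_k)\log M+(1-\omega_k(\zeta_k))\log\eta$ with $M=\max_{|z|\le R}|U-u|$ independent of $k$, which is impossible for $\eta<1$ and large $k$.

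\emph{Sufficiency.} Your exhaustion--Mergelyan--gluing architecture can be made to work, but the inductive step does not close as stated.
\begin{enumerate}
\item With $\varphi=1$ only near $\widehat K_{n-1}$, the glued function involves the extension of $u$ at points of $\operatorname{int}\widehat K_n\setminus F$ where $\varphi\ne1$. So it is not holomorphic on $L_n^\circ$. If instead $\varphi=1$ on all of $\widehat K_n$, the transition lies where your hypothesis says nothing about $U_n-u$. Mergelyan's theorem should be applied on $\widehat K_{n-1}\cup(F\cap\overline{\Delta(0,R_{n+1})})$ instead; this set also has connected complement.
\item The modulus-of-continuity bound for $T_\varphi$ requires $\operatorname{supp}\varphi$ to lie in a small disc. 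For a cutoff equal to $1$ on a large set, the correct glued function is $\varphi U_n+(1-\varphi)u-w$ on $F$ and $U_n-w$ on $\widehat K_{n-1}$, where $w(z)=\frac1\pi\int_F\frac{(U_n-u)(\zeta)\,\bar\partial\varphi(\zeta)}{z-\zeta}\dd A(\zeta)$. Your formula carries a superfluous term $\varphi h$, so as written it does not preserve holomorphy. Also, $U_n-T_\varphi(U_n-u)$ interchanges the roles of $U_n$ and $u$. The correction $w$ is bounded only by $\sup|U_n-u|$ over the transition zone, times a logarithmic factor.
\item Your hypothesis $|U_n-u|<\eta(1-2^{-n})$ provides no zone where that supremum is small. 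So an error of order $\eta$ cannot be absorbed into the budget $\eta2^{-n}$.
\end{enumerate}
The missing idea is to carry in the induction that $|U_n-u|\le\eps_n$, with $\eps_n$ tiny, on $F$ in the annulus $A_n$ where the next transition occurs. This follows from the construction. Beyond the previous transition annulus $A_{n-1}$, the glued function equals $u-w_{n-1}$. There $w_{n-1}$ is the Cauchy transform of a density of size $O(\eps_{n-1})$ supported on $A_{n-1}$, so it is $O(\eps_{n-1}R_{n-1}/R_n)$ on $A_n$. With this bookkeeping the corrections become summable.
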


\begin{lemma}\label{lem:approximation-set}
Let $\Omega\subsetneq\C$ be a domain starlike with respect to the origin.
For every set $S\subset\Omega$ closed in $\C$, there is an Arakelian set
$F$ such that
\[
 S\subseteq F\subset\Omega.
\]
\end{lemma}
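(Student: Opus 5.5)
We build $F$ directly from the radial structure of $\Omega$, so that its complement contains $\C\setminus\Omega$ together with ``thick'' radial channels going out to infinity. Since $\Omega$ is starlike with respect to $0$ and $\Omega\ne\C$, we describe it by the radial function $R(\phi)=\sup\{r>0: re^{i\phi}\in\Omega\}\in(0,+\infty]$. This function is lower semicontinuous, and $\Omega=\{re^{i\phi}:0\le r<R(\phi)\}$.

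\textbf{Step 1: a radial barrier.} For each $r>0$, the set $S\cap\overline{\Delta(0,r)}$ is compact in $\Omega$, so it has positive distance to $\C\setminus\Omega$. Using this, we choose a continuous function $\psi\colon[0,\infty)\to(0,1]$, decreasing to $0$, such that the closed set
\[
 F=\{z\in\Omega:\dist(z,\C\setminus\Omega)\ge\psi(|z|)\}
\]
contains $S$. This set is closed in $\C$, because near a finite boundary point of $\Omega$ the distance to $\C\setminus\Omega$ tends to $0$ while $\psi$ stays bounded below on bounded sets. We may also have to intersect with a suitably shrunk starlike set, $F'=\{re^{i\phi}: r\le \lambda(r)R(\phi)\}$, to keep control of the geometry; we would try the distance version first.

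\textbf{Step 2: connectivity of $\Chat\setminus F$.} Take a point $z\notin F$. Either $z\notin\Omega$ or $z$ lies in $\Omega$ within distance $\psi(|z|)$ of the boundary. We join $z$ to $\C\setminus\Omega$ by a segment to a nearest boundary point $w$; every point of this segment is even closer to the boundary, and if $\psi$ varies slowly the segment stays outside $F$. Next, from any point $w\in\C\setminus\Omega$, the ray $\{tw:t\ge1\}$ lies in $\C\setminus\Omega$ by starlikeness and runs to $\infty$. So every point of $\Chat\setminus F$ connects to $\infty$, and $\Chat\setminus F$ is connected.

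\textbf{Step 3: local connectivity at $\infty$ (the main obstacle).} We need this in the form: for every $r>0$ there is $r'>r$ such that any two points of $\Chat\setminus F$ with modulus greater than $r'$ can be joined in $(\Chat\setminus F)\setminus\overline{\Delta(0,r)}$. The rays of Step 2 move outward, and the short segments of Step 2 have length at most $\psi\le1$, so every point of modulus greater than $r+1$ connects to $\infty$ without entering $\overline{\Delta(0,r)}$. The difficulty is pointwise local connectedness at $\infty$ as a point of the sphere: the neighbourhoods $\{|z|>r'\}\setminus F$ must lie in a single component of $\{|z|>r\}\setminus F$, not merely each connect to $\infty$. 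Since $\infty$ itself belongs to $\Chat\setminus F$, the paths just constructed all terminate at $\infty$ while staying in $\{|z|>r\}\cup\{\infty\}$. This gives connectedness of $(\{|z|>r\}\cup\{\infty\})\setminus F$, which is exactly the standard formulation. The care needed lies in checking that the segments from $z$ to its nearest boundary point avoid $F$. If $\psi$ is only slowly varying this may fail, so I would require $\psi(t)\le\psi(s)$ for $s\le t$ and note that points on the segment are closer to the boundary while their modulus changes by at most $\psi\le1$. Replacing $\psi$ by $\psi(\cdot+1)$ then handles this. With both conditions of \thmref{thm:arakelian}(i) verified, $F$ is an Arakelian set with $S\subseteq F\subset\Omega$.
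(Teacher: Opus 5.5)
Your collar set $F=\{z\in\Omega:\dist(z,\C\setminus\Omega)\ge\psi(|z|)\}$ is a reasonable alternative to the paper's construction. The paper instead deletes thin angular neighbourhoods $O_\alpha$ of the slits $L_\alpha$, sized by $\dist(re^{i\alpha},S)$, so that each point of $O_\alpha$ reaches $\infty$ along an explicit curve of nondecreasing modulus. Your Step~1 and your outward rays are fine. The gap is in the step that everything else rests on: that the segment from $z\in\Omega\setminus F$ to a nearest point $w\in\C\setminus\Omega$ misses $F$.

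Your proposed fix, taking $\psi$ nonincreasing and replacing it by $\psi(\cdot+1)$, does not work. The point $w$ can be farther from the origin than $z$, so $|p|$ can increase along $[z,w]$, and monotonicity does not bound how fast $\psi$ drops there. Shifting the variable only moves the drop. Here is a concrete failure. Let $\Omega=\C\setminus[3,\infty)$ and $S=\overline{\Delta(0,1/2)}$. Let $\psi$ equal $1/2$ on $[0,2.9]$, decrease linearly to $1/100$ on $[2.9,2.95]$, and then decrease continuously to $0$. This $\psi$ meets all your requirements, and $S\subseteq F$. Every point of the circle $|z|=2.95$ lies in $\Omega$ at distance at least $1/20>\psi(2.95)$ from $\C\setminus\Omega$, so the whole circle lies in $F$. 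But $z=2.89\notin F$, so $\Chat\setminus F$ is disconnected.

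What is missing is a slope bound, $|\psi(s)-\psi(t)|\le|s-t|$; this precise form of slow variation is exactly what is needed. Since $w$ is also a nearest point of $\C\setminus\Omega$ to every $p\in[z,w]$, one has $\dist(p,\C\setminus\Omega)=\dist(z,\C\setminus\Omega)-|z-p|$. Therefore
\[
 \dist(p,\C\setminus\Omega)<\psi(|z|)-|z-p|\le\psi(|z|)-\bigl||p|-|z|\bigr|\le\psi(|p|),
\]
so $[z,w]\cap F=\varnothing$. Such a $\psi$ is compatible with Step~1. Put $m(t)=\min\{1,\inf\{\dist(z,\C\setminus\Omega):z\in S,\ |z|\le t\}\}$ and $\psi(t)=\inf_{s\ge0}\{m(s)+|t-s|\}$. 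Then $\psi$ is $1$-Lipschitz, $\psi\le m\le1$, and $\psi(t)\ge m(t+1)>0$; monotonicity and $\psi\to0$ are not needed.

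With this change Steps~2 and~3 go through, because the segments have length less than $1$. In Step~3, do not claim that $(\{|z|>r\}\cup\{\infty\})\setminus F$ is connected. You have only shown that points with $|z|>r+1$ join $\infty$ inside it. That suffices, since the component of $\infty$ in this open subset of $\Chat$ is open and contains $(\{|z|>r+1\}\cup\{\infty\})\setminus F$. The hedge about intersecting with a shrunk starlike set can be dropped.
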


\begin{proof}
If $S=\varnothing$, take $F=\{0\}$. Henceforth suppose that
$S\ne\varnothing$.

For $\alpha\in[0,2\pi)$, put
\[
 R_\alpha=\sup\{r>0:re^{i\alpha}\in\Omega\},
 \qquad
 \Theta=\{\alpha\in[0,2\pi):R_\alpha<+\infty\}.
\]
Since $\Omega$ is open and starlike with respect to the origin,
$R_\alpha>0$ and
\begin{equation}\label{eq:slit-complement}
 \C\setminus\Omega=\bigcup_{\alpha\in\Theta}L_\alpha,
 \qquad
 L_\alpha=\{re^{i\alpha}:r\geq R_\alpha\}.
\end{equation}

Fix $\alpha\in\Theta$. Since $S$ is closed and
$R_\alpha e^{i\alpha}\notin S$, choose $0<\eta_\alpha<R_\alpha/2$ so that
\[
 \{re^{i\alpha}:R_\alpha-\eta_\alpha<r\leq R_\alpha\}\cap S=\varnothing.
\]

Put $r_\alpha=R_\alpha-\eta_\alpha$. For $r>r_\alpha$, define
\[
 d_\alpha(r)=\dist(re^{i\alpha},S),
 \qquad
 \omega_\alpha(r)=\min\left\{\frac18,\frac{d_\alpha(r)}{4r}\right\}
\]
and
\begin{equation}\label{eq:open-neighborhood}
 O_\alpha=\{re^{i(\alpha+u)}:r>r_\alpha,\ |u|<\omega_\alpha(r)\}.
\end{equation}

The function $d_\alpha$ is positive and continuous. Hence $O_\alpha$ is open.
It contains $L_\alpha$ and is disjoint from $S$. To see that $O_\alpha$ is
disjoint from $S$, note that
\[
 |re^{i(\alpha+u)}-re^{i\alpha}|
 \leq r|u|<\frac14d_\alpha(r).
\]

Let $O=\bigcup_{\alpha\in\Theta}O_\alpha$ and $F=\C\setminus O$.
Then $F$ is closed, $S\subseteq F$, and \eqref{eq:slit-complement} gives
$F\subset\Omega$.

Every point $z=re^{i(\alpha+u)}\in O_\alpha$ can be joined to infinity in
$O_\alpha$ without decreasing its modulus. One such curve is
\[
 \gamma_z(t)=t\exp\left(i\left[\alpha+
 u\frac{\omega_\alpha(t)}{\omega_\alpha(r)}\right]\right),
 \qquad t\geq r.
\]

Indeed, $|u|\omega_\alpha(t)/\omega_\alpha(r)<\omega_\alpha(t)$ for every
$t\geq r$. Thus every component of $O$ is unbounded. The same is true
of every component of $O\setminus\overline{\Delta(0,R)}$ for every $R>0$.
It follows that $O\cup\{\infty\}=\Chat\setminus F$ is connected and that
\[
 \{\infty\}\cup
 \bigl(O\setminus\overline{\Delta(0,R)}\bigr),
 \qquad R>0,
\]

is a connected relative neighborhood basis at infinity. Therefore
$\Chat\setminus F$ is locally connected at infinity. By Arakelian's
criterion, $F$ is an Arakelian set.
\end{proof}

Define
\begin{equation}\label{eq:S}
 S=\bigcup_{\theta\in K}\bigcup_{0\leq t<1}
 \Phi(T_{\theta,t}).
\end{equation}

This set is closed in $\C$. To prove this, consider a convergent
sequence of its points. Formula \eqref{eq:uniform-disc-escape} prevents the
corresponding parameters $t_j$ from tending to one. After passing to a
subsequence, $\theta_j\to\theta\in K$ and $t_j\to t<1$. The distance
function $d_D$ is continuous. The preimages under $\Phi$ then lie in one compact
subset of $D$, and a further subsequence converges to a point of
$T_{\theta,t}$. Continuity of $\Phi$ proves the claim. We apply
\lemref{lem:approximation-set} and fix an Arakelian set $F$ such that
\begin{equation}\label{eq:F}
 S\subseteq F\subset\Omega.
\end{equation}
This set will be used for approximation by entire functions in the next
section.

\section{Proof of Theorem~\ref{thm:main}}\label{sec:entire-function}

\begin{proof}[Proof of \thmref{thm:main}]
We use two uniform approximations to obtain an error bound that decreases
with $\rho_\theta(t)$. On $\Omega$, put
\begin{equation}\label{eq:P}
 P=A\circ Q^{-1}+C_*.
\end{equation}

By \eqref{eq:margin},
\begin{equation}\label{eq:exp-margin}
 e^{-\Real{P(w)}}\leq\frac12\rho_\theta(t)
 \quad\hbox{for }w\in\Phi(T_{\theta,t}).
\end{equation}

Apply \thmref{thm:arakelian} first to $-P|_F$. There is an entire
function $H$ such that
\begin{equation}\label{eq:H-approximation}
 |H+P|<\log2
 \quad\hbox{on }F.
\end{equation}

The function $ge^{-H}$ is holomorphic on $\Omega$. A second application of
\thmref{thm:arakelian} gives an entire function $G$ such that
\begin{equation}\label{eq:G-approximation}
 |G-ge^{-H}|<\frac18
 \quad\hbox{on }F.
\end{equation}

Define
\begin{equation}\label{eq:f}
 f=e^HG.
\end{equation}

Equations \eqref{eq:H-approximation} and \eqref{eq:G-approximation} imply
\begin{equation}\label{eq:f-g-error}
 |f-g|
 <\frac14e^{-\Real{P}}
 \leq\frac18\rho_\theta(t)
 \quad\hbox{on }\Phi(T_{\theta,t}).
\end{equation}

Fix $\theta\in K$ and $0\leq t<1$. Let
\[
 \mathcal U_{\theta,t}
 =\Phi\bigl(\Delta(te^{i\theta},\rho_\theta(t))\bigr).
\]

This is a bounded Jordan domain because $\Phi$ is conformal on a
neighborhood of the closed disc $T_{\theta,t}$. On its boundary,
\[
 |g(w)-te^{i\theta}|=\rho_\theta(t).
\]

The strict inequality \eqref{eq:f-g-error} and Rouch\'e's theorem show that
$f(w)-te^{i\theta}$ and $g(w)-te^{i\theta}$ have the same number of zeros
in $\mathcal U_{\theta,t}$. The latter function has exactly one zero.
Therefore there is exactly one point $z_\theta(t)\in\mathcal U_{\theta,t}$
such that
\begin{equation}\label{eq:inverse-lift}
 f(z_\theta(t))=te^{i\theta}.
\end{equation}

The zero is simple because it is the only zero counted with multiplicity.

At $t=0$, the disc $T_{\theta,0}$ and the domain
$\mathcal U_{\theta,0}$ do not depend on $\theta$. Hence the points
$z_\theta(0)$ all coincide with one simple zero $z_0$ of $f$. Denote by
$\phi$ the inverse germ of $f$ at zero for which $\phi(0)=z_0$.

We now show that the points in \eqref{eq:inverse-lift} define the analytic
continuation of $\phi$. Fix $t_0<1$. The centres and radii of the discs
$T_{\theta,t}$ depend continuously on $t$, and these discs lie in one
compact subset of $D$ for $t$ near $t_0$. Any sequence $t_j\to t_0$ has a
subsequence for which $z_\theta(t_j)$ converges. Its limit is a zero of
$f(w)-t_0e^{i\theta}$ in
$\overline{\mathcal U_{\theta,t_0}}$. The strict boundary inequality
places it in the interior, and uniqueness makes it
$z_\theta(t_0)$. Thus $z_\theta(t)$ is continuous. Since every zero is
simple, the inverse function theorem makes it locally holomorphic in the
range variable. Uniqueness on overlaps patches these local inverses to the
analytic continuation of $\phi$ along $[0,e^{i\theta})$.

Finally, $z_\theta(t)$ belongs to the image
$\Phi(T_{\theta,t})$. Formula \eqref{eq:uniform-disc-escape} gives
\begin{equation}\label{eq:inverse-escape}
 |z_\theta(t)|\longrightarrow+\infty
 \quad (t\to1^-).
\end{equation}

The values in \eqref{eq:inverse-lift} remain bounded while the moduli of their
preimages tend to infinity. Hence $f$ cannot be a polynomial and is transcendental.
The branch cannot continue holomorphically through $e^{i\theta}$. If
continuation of inverse branches is allowed to be meromorphic, a pole there
is impossible as well. Composition of the transcendental entire function
$f$ with a pole has an essential singularity and cannot agree with the
identity. Thus
\[
 \{e^{i\theta}:\theta\in K\}\subseteq E(f,\phi).
\]

The left-hand set has positive $s$-dimensional Hausdorff measure and
positive logarithmic capacity by
\lemref{lem:positive-capacity}. This proves \thmref{thm:main}.
\end{proof}

\begin{remark}
The entire function $f$ constructed above has infinite lower order:
\[
 \underline{\rho}(f)
 =\liminf_{r\to\infty}\frac{\log\log M(r,f)}{\log r}=+\infty,
 \qquad M(r,f)=\max_{|z|=r}|f(z)|.
\]
Indeed, \eqref{eq:inverse-lift} and \eqref{eq:inverse-escape} show that every
point of $\{e^{i\theta}:\theta\in K\}$ is a finite asymptotic value of $f$.
By the Denjoy--Carleman--Ahlfors theorem, a transcendental entire function
of finite lower order $\mu$ has at most $2\mu$ finite asymptotic values
\cite[p.~118]{Langley2019}.
\end{remark}

\begingroup
\setlength{\emergencystretch}{2em}
\renewcommand{\refname}
\endgroup

\end{document}